\newtheorem{thm}{Theorem}[section]
\newtheorem{lem}[thm]{Lemma}
\theoremstyle{definition}
\newtheorem{note}[thm]{Note}
\newcommand{\R}{\mathbf{R}}
\newcommand{\ol}{\overline}
\newcommand{\C}{\mathcal{C}}
\newcommand{\Mobius}{M\"{o}bius }
\renewcommand{\(}{\left(}
\renewcommand{\)}{\right)}
\renewcommand{\S}{\mathbf{S}}
\renewcommand{\(}{\left(}
\renewcommand{\)}{\right)}
\renewcommand{\tilde}{\widetilde}
\renewcommand{\epsilon}{\varepsilon}
\renewcommand{\setminus}{\smallsetminus}
\DeclareMathOperator{\cl}{cl}
\DeclareMathOperator{\graph}{graph}
\DeclareMathOperator{\Hess}{Hess}
\DeclareMathOperator{\rank}{rank}
\title[Normal curvatures of asymptotically constant graphs]{Normal curvatures of asymptotically constant graphs and Carath\'{e}odory's conjecture}
\author{Mohammad Ghomi}
\address{School of Mathematics, Georgia Institute of Technology,
Atlanta, GA 30332}
\email{ghomi@math.gatech.edu}
\urladdr{www.math.gatech.edu/$\sim$ghomi}
\author{Ralph Howard}
\address{Department of Mathematics,
University of South Carolina,
Columbia, SC 29208}
\email{howard@math.sc.edu}
\urladdr{www.math.sc.edu/$\sim$howard}
\date{\today \,(Last Typeset)}
\subjclass[2000]{Primary: 53A05, 52A15; Secondary: 37C10, 53C21.}
\keywords{Umbilical point,  Carath\'{e}odory conjecture, Loewner conjecture,   Principal line, \Mobius inversion, Parallel surface, Divergence theorem.}
\thanks{The research of the first named author was supported in part by NSF grant DMS-0806305.}
\begin{document}

\begin{abstract}
We show that Carath\'{e}odory's conjecture, on umbilical points of closed convex  surfaces, may be reformulated in terms of the existence of at least one umbilic in the graphs of functions $f\colon\R^2\to\R$ whose gradient decays uniformly faster than $1/r$.  The divergence theorem then yields a pair of integral equations for the normal curvatures of these graphs, which establish some weaker forms of the conjecture. In particular, we show that  there are uncountably many principal lines in the graph of $f$ whose projection into $\R^2$ are parallel to any given direction.
\end{abstract}

\maketitle

\section{Introduction}
Carath\'{e}odory's celebrated conjecture \cite{berger:panorama,yau:problems1} which has been the subject of numerous investigations since 1920's \cite{struik}, asserts that every (sufficiently smooth) closed convex surface $M$ in Euclidean space $\R^3$ has at least two \emph{umbilics}, i.e., points where the principal curvatures of $M$ are equal. Almost all attempts to prove this claim have been concerned with establishing the more general  local conjecture of Loewner on the index of the singularities of principal line fields. Here, by contrast, we develop a global approach to this problem. To state our main results, let us say that a function  $f\colon\R^2\to\R$  is \emph{asymptotically constant} (at a sufficiently fast and uniform rate)
 provided that, in polar coordinates, 
 $$
 \lim_{r\to\infty}f(r,\theta)=const.,\quad\text{and}\quad \lim_{r\to\infty}r\,\|\nabla f(r,\theta)\|=0,
 $$
 uniformly with respect to $\theta$. In other words $f$ converges uniformly to a constant at infinity, while the norm of its gradient decays faster than $1/r$ uniformly in all directions.  
\begin{thm}\label{thm:1}
For every $\C^2$ closed convex surface $M\subset\R^3$ and umbilical point $p\in M$, there exists an  asymptotically constant $\C^2$ function $f\colon \R^2\to\R$, and a diffeomorphism between $M\setminus\{p\}$ and  $\graph(f)$ which preserves the principal directions. In particular, $\graph(f)$ has one fewer umbilic than $M$ has.
\end{thm}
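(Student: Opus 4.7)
The plan is to apply M\"obius inversion $\iota(q) = q/\|q\|^2$ through the unit sphere centered at $p$. After a Euclidean motion I may assume $p=0$, $T_pM = \{z=0\}$, and $M \subset \{z \geq 0\}$. The map $\iota$ is a smooth conformal involution of $\R^3 \setminus \{0\}$, so its restriction to $M \setminus \{p\}$ is a $\C^2$ diffeomorphism onto the embedded surface $M' := \iota(M \setminus \{p\})$. Since umbilicity and principal directions are conformal invariants of the second fundamental form, $\iota$ carries the umbilics of $M \setminus \{p\}$ bijectively to those of $M'$ and maps principal directions to principal directions; in particular $M'$ has exactly one fewer umbilic than $M$.

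It remains to realize $M'$ as the graph of an asymptotically constant $f\colon \R^2 \to \R$. The guiding observation is that since $p$ is umbilic, $M$ has a well-defined osculating sphere $S$ at $p$, of radius $1/\lambda$ centered at $(0,0,1/\lambda)$, where $\lambda \geq 0$ is the common principal curvature at $p$; a direct calculation shows $\iota(S \setminus \{p\}) = \{z = \lambda/2\}$, a horizontal plane. Because $M$ osculates $S$ to second order at $p$, one expects $M'$ to be $\C^2$-close to this plane near infinity. To realize $M'$ as a graph globally, I would show that the vertical projection $\Pi \colon M' \to \R^2$ is a diffeomorphism: properness follows from the fact that the vertical projection of $\iota(q)$ has norm $\sim 1/\|q\|$ as $q \to p$, and bijectivity reduces (via $\iota^{-1} = \iota$) to showing that for every $(x_0, y_0) \in \R^2$ the vertical line above $(x_0, y_0)$ pulls back under $\iota$ to a circle through $p$ that meets $M$ in exactly two points --- $p$ and one other. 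Asymptotic constancy then follows by expanding $M$ near $p$ as $z = (\lambda/2)(x^2+y^2) + o(x^2+y^2)$ and computing: setting $r = 1/\sqrt{u^2+v^2}$ yields $f(u,v) = \lambda/2 + o(1/\sqrt{u^2+v^2})$, and the $\C^2$ regularity further yields $\|\nabla f\| = o(1/\sqrt{u^2+v^2})$.

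The hard step is injectivity of $\Pi$. In each vertical plane through the origin this reduces to a planar assertion: the convex slice $\gamma$ of $M$, tangent to the horizontal $s$-axis at $p$, meets the circle $C_t = \{s^2+z^2 = ts\}$, tangent to the vertical $z$-axis at $p$, in exactly two points for every $t > 0$. Convexity alone does not force this (two convex closed curves may in general meet in four points); the proof must exploit the perpendicular tangency of $\gamma$ and $C_t$ at $p$, together with $\gamma \subset \{z \geq 0\}$, to force each $C_t$ to exit the convex region bounded by $\gamma$ exactly once after entering it at $p$.
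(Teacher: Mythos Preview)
Your overall geometric picture is right, and your identification of the ``hard step'' is exactly where the difficulty lies. Unfortunately, the argument you propose for that step --- using convexity of the slice $\gamma$, the perpendicular tangency at $p$, and $\gamma \subset \{z \geq 0\}$ --- is not enough, and in fact the direct M\"obius inversion of a general $\C^2$ closed convex surface through an umbilic need \emph{not} be a graph.

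A concrete counterexample: take the oblate ellipsoid
\[
\frac{x^2}{a^2}+\frac{y^2}{a^2}+\frac{(z-b)^2}{b^2}=1,\qquad a=10,\ b=\tfrac{1}{20}.
\]
By rotational symmetry the origin is an umbilic, the tangent plane there is $\{z=0\}$, and the slice by $\{y=0\}$ is the ellipse $x^2/100+(z-0.05)^2/0.0025=1$. The circle $(x-1)^2+z^2=1$, tangent to the $z$-axis at the origin, meets this ellipse in \emph{four} points: the origin, one exit point near $x\approx 0.01$ on the upper arc, and then a re-entry and exit pair very close to $(2,0)$. (One can verify this directly: the upper arc of the circle begins inside the ellipse, exits when $z$ exceeds $\approx 0.1$, re-enters when $z$ drops back near $0.05$, and exits once more before reaching $(2,0)$, which lies just outside the ellipse.) So the corresponding vertical line meets $\iota(M\setminus\{p\})$ three times, and $\Pi$ is not injective.

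The paper avoids this obstacle by inserting a preliminary normalization: before inverting, it replaces $M$ by a surface $M'$ that is $\C^1$-close to a round sphere and has strictly positive curvature, obtained by pushing $M$ out to a far parallel surface $M^r$ (which makes $(1/r)M^r$ nearly spherical) and possibly applying a M\"obius inversion. Both steps preserve principal directions. The near-sphericity is what drives the global graph property (Lemma~\ref{lem:graph}): every circle tangent to the $z$-axis meets a round sphere tangent to $\{z=0\}$ orthogonally in exactly two points, and this count is stable under $\C^1$-perturbation for circles of radius bounded below, while small circles are handled by the local estimate you already have. The positive-curvature condition is also what makes the asymptotic-constancy computation go through cleanly (Lemma~\ref{lem:decay}, part~II), since it allows one to write $z=r^2(1+\rho)$ with $\rho\to 0$ and derive the uniform $o(1/\ol r)$ decay of $\nabla\ol f$; your one-line appeal to ``$\C^2$ regularity'' for that decay would need a comparable expansion, and at a planar umbilic ($\lambda=0$) the argument would have to be reworked.

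In short, your inversion idea is correct but cannot be applied to $M$ directly; the missing ingredient is the parallel-surface normalization that forces $M$ close to a sphere before inverting.
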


Thus Carath\'{e}odory's conjecture may be reformulated in terms of existence of at least one umbilical point in the graphs of asymptotically constant  functions $f\colon\R^2\to\R$. We use this setting to obtain some evidence in support of the conjecture as stated in the next two results. For any point $p$ in the plane $\R^2$ and direction (or unit vector) $X$ in the circle $\S^1$, let $k(p,X)$ be the \emph{normal curvature} of the graph of $f$ at  the point $(p,f(p))$ in the direction of a  tangent vector  whose projection into $\R^2$ is parallel to $X$, i.e., set
\begin{equation}\label{eq:k}
k(p,X):=\frac{f_{XX}(p)}{\left(1+f_X^2(p)\right)\sqrt{1+\|\nabla f(p)\|^2}},
\end{equation}
where $f_X:=\langle\nabla f,X\rangle$ is the  derivative of $f$ in the direction $X$,  $f_{XX}:=(f_X)_X$, and $f_X^2$ denotes $(f_X)^2$. 
 Note that $f$ has an umbilic at $p$ (or $(p,f(p))$ is an umbilical point of $\graph(f)$) if  $k(p,\cdot)$ is constant on $\S^1$. So, if Carath\'{e}odory's conjecture were true, then for every pair of directions $X$, $Y$ there would exist a point $p$ such that $k(p,X)=k(p,Y)$. The next result shows that there are always an abundant supply of such points for any given pair of directions. Here $B_r$ denotes a closed ball of radius $r$ centered at the origin of $\R^2$, and 
 $$
 d\mathcal{A}_p:=\sqrt{1+\|\nabla f(p)\|^2}\,dx\wedge dy
 $$
is the area element of the graph of $f$.

\begin{thm}\label{thm:2}
Let $f\colon \R^2\to\R$ be an  asymptotically constant $\C^2$ function. Then for every pair of  directions $X$, $Y\in\S^1$, 
$$
 \lim_{r\to\infty}\int_{p\in B_r}\big(k(p,X)-k(p,Y)\big)\(1+f_X^2(p)\)\(1+f_Y^2(p)\)\,d\mathcal{A}_p=0.
$$
 In particular  the function $k(\,\cdot\,,X)-k(\,\cdot\,,Y)$ is either identically zero or else changes sign on  $\R^2$.
\end{thm}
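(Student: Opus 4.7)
The plan is to use the defining formula \eqref{eq:k} to collapse the weighted integrand algebraically, exhibit the resulting expression as a planar divergence, and then estimate the resulting boundary term using the asymptotic decay of $\nabla f$.

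First I would substitute \eqref{eq:k} directly. The area element $d\mathcal{A}_p=\sqrt{1+\|\nabla f(p)\|^2}\,dx\,dy$ carries exactly the square-root factor appearing in the denominator of each $k(p,\cdot)$, while the weights $(1+f_X^2)$ and $(1+f_Y^2)$ cancel the remaining denominators. After this cancellation the integrand in the statement reduces to the ordinary Lebesgue integrand
$$
\left[f_{XX}(p)\left(1+f_Y^2(p)\right)-f_{YY}(p)\left(1+f_X^2(p)\right)\right] dx\,dy.
$$

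Next I would recognize this as a divergence. Using $f_{XY}=f_{YX}$,
$$
\partial_X\left[f_X(1+f_Y^2)\right]=f_{XX}(1+f_Y^2)+2f_X f_Y f_{XY},\qquad \partial_Y\left[f_Y(1+f_X^2)\right]=f_{YY}(1+f_X^2)+2f_X f_Y f_{XY},
$$
so the cross terms cancel upon subtraction. Since $X,Y\in\S^1$ are constant vectors, each directional derivative is itself a divergence, $\partial_X g=\nabla\cdot(gX)$, whence
$$
f_{XX}(1+f_Y^2)-f_{YY}(1+f_X^2)=\nabla\cdot\left[f_X(1+f_Y^2)\,X-f_Y(1+f_X^2)\,Y\right].
$$
The divergence theorem on $B_r$ then turns the left-hand side of the theorem into a line integral on $\partial B_r$ whose integrand is pointwise bounded by $2\|\nabla f\|(1+\|\nabla f\|^2)$, using $|X\cdot\nu|,|Y\cdot\nu|\le 1$ and $|f_X|,|f_Y|\le\|\nabla f\|$. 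The asymptotic hypothesis gives $\|\nabla f(r,\theta)\|=o(1/r)$ uniformly in $\theta$, so this bound is $o(1/r)$ on $\partial B_r$, and against the circumference $2\pi r$ the boundary integral is $o(1)$ and vanishes as $r\to\infty$.

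Finally I would deduce the sign-change dichotomy. Since $(1+f_X^2)(1+f_Y^2)\,d\mathcal{A}_p$ is a strictly positive continuous measure on $\R^2$, if $k(\cdot,X)-k(\cdot,Y)$ had constant sign without being identically zero, then by continuity the weighted integrand in the statement would be strictly positive (or strictly negative) on some open set, making $\int_{B_r}$ monotone in $r$ and bounded away from zero for large $r$, contradicting the vanishing limit. The only substantive step is spotting the divergence form in the middle paragraph; once that cancellation is observed, the asymptotic hypothesis on $f$ is calibrated precisely so that the boundary term dies.
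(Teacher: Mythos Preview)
Your proposal is correct and follows essentially the same route as the paper: collapse the weighted integrand to $f_{XX}(1+f_Y^2)-f_{YY}(1+f_X^2)$, rewrite it as the divergence of the vector field $f_X(1+f_Y^2)\,X-f_Y(1+f_X^2)\,Y$, and kill the boundary term using the $o(1/r)$ decay of $\nabla f$. The only cosmetic differences are that you carry out the boundary estimate inline (whereas the paper isolates it as a separate lemma on vector fields whose norm decays faster than $1/r$) and that you spell out the sign-change dichotomy, which the paper leaves to the reader.
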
 

To describe our next result, let us say that $X\in\S^1$ is a principal direction of $f$ at $p\in\R^2$ provided that $X$ is parallel to the projection of a principal direction of  $\graph(f)$  at $(p,f(p))$ into $\R^2$. Next set
$X(\theta):=(\cos(\theta),\sin(\theta))$, and note that $X(\theta_0)$ is a principal direction of $f$ if and only if $\theta_0$ is a critical point of the function $\theta\mapsto k(p,X(\theta))$, for some $p$. Thus if Carath\'{e}odory's conjecture holds, then for every direction $X(\theta_0)$ there should exist a point $p$ such that $\frac{\partial k}{\partial\theta}(p,X(\theta_0))=0$. 
Again, we show that there is no shortage of such points:

\begin{thm}\label{thm:3}
Let $f\colon \R^2\to\R$ be an  asymptotically constant $\C^2$ function. Then for every direction  $X(\theta_0)\in\S^1$
$$
\lim_{r\to\infty}\int_{p\in B_r}\frac{\partial k}{\partial\theta}\big(p,X(\theta_0)\big)\(1+f_{X(\theta_0)}^2(p)\)\,d\mathcal{A}_p=0.
$$
In particular, the set of points $p\in\R^2$ where $X(\theta_0)$ is a principal direction of $f$ at $p$ is either  all of $\R^2$ or else separates $\R^2$.
\end{thm}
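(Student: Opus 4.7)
My plan is to identify the 2-form in the integrand as an exact divergence on $\R^2$ and then pass to a boundary integral over $\partial B_r$ that vanishes in the limit thanks to the asymptotic hypothesis on $\nabla f$.

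First I would reduce to the case $\theta_0=0$ by rotating the $(x,y)$-plane; since the asymptotic hypothesis is stated in polar coordinates, it is rotation invariant, as are $B_r$ and $d\mathcal{A}$. Then $X=(1,0)$, $T=(0,1)$, and $f_{X(\theta_0)}=f_x$. Differentiating (\ref{eq:k}) in $\theta$ with the help of $\partial_\theta f_X=f_T$ and $\partial_\theta f_{XX}=2f_{XT}$, the factor $W:=\sqrt{1+\|\nabla f\|^2}$ from $d\mathcal{A}$ cancels the one appearing in $k$, as does one factor of $(1+f_X^2)$, and the integrand reduces to
$$
\Big(2 f_{xy}-\frac{2 f_x f_y f_{xx}}{1+f_x^2}\Big)\, dx\wedge dy.
$$

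The main technical step---and the principal obstacle---is to express this 2-form as $\diver F\,dx\wedge dy$ for some explicit $F\colon\R^2\to\R^2$. Motivated by the identities $f_{xx}/(1+f_x^2)=\partial_x\arctan(f_x)$ and $2 f_x f_{xx}/(1+f_x^2)=\partial_x\ln(1+f_x^2)$, an ansatz built from $\arctan(f_x)$ and $\ln(1+f_x^2)$ naturally suggests itself, and a short calculation shows that
$$
F:=\Big(-f_y\ln(1+f_x^2),\ \, f_x\ln(1+f_x^2)+2\arctan(f_x)\Big)
$$
has divergence equal to the displayed scalar integrand. The divergence theorem then gives $\int_{B_r}\diver F\,dx\,dy=\int_{\partial B_r} F\cdot\nu\,ds$. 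On $\partial B_r$ the asymptotic hypothesis forces $\|\nabla f\|=o(1/r)$ uniformly, so $|\arctan(f_x)|=o(1/r)$ while $|f_x\ln(1+f_x^2)|,|f_y\ln(1+f_x^2)|=o(1/r^3)$; hence $|F|=o(1/r)$ on $\partial B_r$, and integration over the circle of length $2\pi r$ produces $o(1)$, establishing the limit.

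For the ``in particular'' assertion, I would observe that $(1+f_X^2)\,d\mathcal{A}$ is strictly positive on $\R^2$. If $\partial k/\partial\theta\,(\,\cdot\,, X(\theta_0))$ were not identically zero yet of constant sign, the integrals over $B_r$ would be monotone in $r$ with a nonzero (possibly infinite) limit, contradicting the first part. Thus either $X(\theta_0)$ is a principal direction at every point of $\R^2$, or else $\partial k/\partial\theta\,(\,\cdot\,, X(\theta_0))$ assumes both signs, in which case its zero set---precisely the locus where $X(\theta_0)$ is a principal direction---separates $\R^2$ by continuity.
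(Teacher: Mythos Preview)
Your argument is correct and follows the same overall strategy as the paper: rotate so that $\theta_0=0$, compute the integrand explicitly, recognize it as a divergence, and invoke the divergence theorem together with the uniform decay of $\nabla f$ (the paper packages this last step as a separate lemma about vector fields $V$ with $\|V\|=o(1/r)$). Your computation of the integrand $2f_{xy}-2f_xf_yf_{xx}/(1+f_x^2)$ and the verification that your $F$ has exactly this divergence are both fine, as is the sign-change argument for the ``in particular'' clause.

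The one substantive difference is the choice of vector field. The paper uses the much simpler
\[
V=\Big(\frac{f_2}{\sqrt{1+f_1^2}},\,0\Big),\qquad
\nabla\!\cdot V=\frac{(1+f_1^2)f_{12}-f_1f_2f_{11}}{(1+f_1^2)^{3/2}},
\]
which agrees with the actual integrand only up to the strictly positive factor $2\sqrt{1+f_1^2}$. Read literally, the paper's Lemma therefore yields $\lim_{r\to\infty}\int_{B_r}\nabla\!\cdot V=0$, which is enough for the sign-change conclusion (hence for the separation statement) but not for the displayed limit verbatim. Your field $F$, though less elegant, has divergence \emph{equal} to the integrand, so your version proves the theorem exactly as stated. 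In that respect your argument is the more careful one; the paper's is shorter but, taken at face value, delivers only the qualitative consequence.
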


Note that  each of the last two results gives  considerably more information than can be subsumed  by a  positive resolution of Carath\'{e}odory's conjecture. The last theorem,   for instance, guarantees the existence of uncountably many principal lines in the graph of $f$ whose projection into $\R^2$ are parallel to any given direction, while the existence of an umbilic in the graph of $f$  ensures only the existence of  one such line for each direction. Also note that, to prove Carath\'{e}odory's conjecture, it suffices to show that the zero sets of $k(\,\cdot\,,X)-k(\,\cdot\,,Y)$ and $\frac{\partial k}{\partial\theta}(\cdot,X)$, which are abundant by Theorems \ref{thm:2} and \ref{thm:3}, have nonempty intersection for some pair of linearly independent directions $X$, $Y\in\S^1$; see Note \ref{note:pdes} for more on this approach.
Theorem \ref{thm:1} is proved using a combination of \Mobius inversions and the operation of moving a surface parallel to itself (Section \ref{sec:1}), while Theorems \ref{thm:2} and \ref{thm:3} are fairly quick applications of the divergence theorem after some computations (Section \ref{sec:2}). We will also discuss how to construct closed surfaces, including some convex ones, which are smooth and umbilic free in the complement of one point, and may be arbitrarily close to a sphere (Section \ref{sec:3}).

According to Struik \cite{struik}, the earliest references to the conjecture attributed to Carath\'{e}odory appear in the works of Cohn-Vossen, Blaschke, and Hamburger dating back to 1922.
The first significant results on the conjecture were due to Hamburger who established the analytic case in a series of long papers \cite{hamburgerI, hamburgerII, hamburgerIII} published in 1940--41. Attempts to  find shorter  proofs attracted the attention of Bol \cite{bol}, Klotz \cite{klotz:umbilic},  and Titus \cite{titus} in the ensuing decades. As late as 1993, however, Scherbel \cite{scherbel} was still correcting some errors  in these simplifications, while reconfirming the validity of Hamburger's theorem. Another reexamination of  the proof of the analytic case appears in a comprehensive paper of Ivanov \cite{ivanov} who supplies his own arguments for clarifying various details. All the works mentioned thus far have been primarily concerned with establishing the analytic version of the conjecture of Loewner, which states that the index of the singularities of principal line fields  is at most one, and thus implies Carath\'{e}odory's conjecture via Poincar\'{e}-Hopf index theorem. See Smyth and Xavier \cite{SX1, SX2, SX3} for studies of Loewner's conjecture in the smooth case, and  Lazarovici \cite{lazarovici} for a global result on principal foliations. Another global result is by Feldman \cite{feldman:umbilic}, who showed that generic closed convex surfaces have four umbilics; also see \cite{ghomi&kossowski} for some applications of the $h$-principle to studying homotopy classes of principal lines. A global generalization of Carath\'{e}odory's conjecture is discussed in \cite{gg:umbilic}, and  an interesting analogue of the conjecture for noncompact complete convex hypersurfaces has been studied by Toponogov \cite{Toponogov}.
A number of approaches to proving the Carath\'{e}odory or Loewner conjectures in the smooth case are discussed in \cite{ovsienko&tabachnikov,nikolaev,GK}, and   more references or background may be found in \cite{sotomayor&garcia,gutierrez:umbilic}.

%%%%%%%%%%%%%%%%%%%%%%%%%%%%%%%%%%%%%%%%%%%%
\section{\Mobius Inversions and Parallel Surfaces:\\ Proof of Theorem \ref{thm:1}}\label{sec:1}
%%%%%%%%%%%%%%%%%%%%%%%%%%%%%%%%%%%%%%%%%%%%
The basic idea for proving Theorem \ref{thm:1} is to blow up an umbilic point of $M$ via a \Mobius inversion; however, for the resulting surface to be  an asymptotically constant graph, we first need to transform $M$ to a positively curved surface which is close to a sphere with respect to the Whitney $\C^1$-topology. These preliminary transformations are obtained by taking a \Mobius inversion of an outer parallel surface of $M$ at a sufficiently large distance, as  described below. 
Let us begin by 
recording that the \emph{\Mobius inversion} of any   closed set $A\subset\R^3$  is given by
$$
m(A):=\cl\left\{\frac{p}{\|p\|^2}\,\Big|\, p\in A\setminus\{o\}\right\},
$$
where $\cl$ denotes the closure in $\R^3$ and $o$ is the origin. It is well-known that $m$ \emph{preserves the principal directions} of $\C^2$ surfaces $M\subset\R^3$, i.e.,  a tangent vector $X\in T_pM$ is a principal direction of $M$ if and only if $dm_p(X)$ is a principal direction of $m(M)$ at $m(p)$, where $d$ denotes the differential map. 
This follows for instance from the way the second fundamental form is transformed under a conformal change of the metric in the ambient space, e.g., see \cite[Lemma P.6.1]{hertrich}. 

\begin{lem}\label{lem:decay}
Let $U\subset\R^2$ be an open neighborhood of $o$, and 
$f\colon U\to\R$ be a $\C^1$ function with $f(o)=\|\nabla f(o)\|=0$. Then there exist $r_0>0$, a bounded  open neighborhood $V\subset\R^2$ of $o$, and a  function $\ol f\colon\R^2\setminus V\to\R$, such that 
$$
m\big(\graph(f\big|_{B_{r_0}})\big)=\graph(\ol f).
$$
Furthermore, if
$f$ is $\C^2$, has positive curvature, and  an umbilic at $o$, then $\ol f$ is asymptotically constant.
\end{lem}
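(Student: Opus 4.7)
The plan is to recognize $m(\graph(f|_{B_{r_0}}))$ as the graph of a function $\bar f$ by studying the planar map
\[
\Phi(x,y) := \frac{(x,y)}{\rho^2}, \qquad \rho^2 := r^2 + f(x,y)^2, \quad r^2 := x^2+y^2,
\]
which is the projection of the inverted surface onto the first two coordinates. Since $m$ is a diffeomorphism of $\R^3\setminus\{o\}$, it suffices to show that $\Phi$ is a diffeomorphism of $B_{r_0}\setminus\{o\}$ onto $\R^2\setminus V$ for some bounded open $V$ containing $o$. Because $\Phi(x,y)$ is a positive scalar multiple of $(x,y)$, it preserves the polar angle $\theta$ and acts on the radial coordinate alone via $r\mapsto r/\rho^2$. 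Using $f(o)=0=\nabla f(o)$ to conclude $f = o(r)$ and $f_r = o(1)$ as $r\to 0$, a direct computation gives
\[
\frac{d}{dr}\!\left(\frac{r}{\rho^2}\right) = \frac{f^2 - r^2 - 2rff_r}{\rho^4} = \frac{-r^2 + o(r^2)}{\rho^4} < 0
\]
uniformly in $\theta$ on $(0, r_0]$, provided $r_0$ is small enough. Thus $\Phi$ is injective and sends each ray from $o$ in $B_{r_0}\setminus\{o\}$ to an unbounded ray whose endpoint lies on the image of $\partial B_{r_0}$; the complement of the image is then the required bounded open $V$. Defining $\bar f(\Phi(x,y)) := f(x,y)/\rho^2$ yields $\graph(\bar f) = m(\graph(f|_{B_{r_0}}))$.

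For the second claim, the hypothesis that $f\in\C^2$ has positive curvature and an umbilic at $o$ forces $\Hess f(o) = \kappa I$ for some $\kappa > 0$. Taylor's theorem then gives
\[
f(x,y) = \tfrac{\kappa}{2} r^2 + E(x,y), \qquad E = o(r^2), \quad \nabla E = o(r),
\]
uniformly in $\theta$. A short algebraic manipulation of $F := f/\rho^2$ yields
\[
F - \frac{\kappa}{2} = \frac{2E - \kappa f^2}{2\rho^2} = o(1)
\]
as $r\to 0$. Since $R := \|\Phi(x,y)\| = r/\rho^2 \sim 1/r$ in this regime, this gives $\bar f \to \kappa/2$ uniformly as $R\to\infty$, settling the first clause in the definition of asymptotic constancy.

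The main obstacle is proving the decay $R\|\nabla\bar f\| \to 0$. The chain rule gives $\nabla\bar f = (d\Phi)^{-T}\nabla F$ with
\[
\nabla F = \frac{\nabla f}{\rho^2} - \frac{f\,\nabla\rho^2}{\rho^4},
\]
and a direct computation shows $d\Phi = \rho^{-2}\!\left(I - 2(x,y)(x,y)^T/\rho^2\right) + O(1)$, whose leading part is a reflection scaled by $\rho^{-2}$, so that $\|(d\Phi)^{-1}\| = O(\rho^2) = O(r^2) = O(1/R^2)$. Estimated term by term, each summand of $\nabla F$ is of size $\kappa\|(x,y)\|/r^2 = O(1/r)$, yielding only $R\|\nabla\bar f\| = O(1)$. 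The crucial observation is that the umbilic hypothesis forces the leading $\kappa(x,y)/r^2$ contributions in the two summands of $\nabla F$ to cancel exactly: substituting the Taylor expansion above and expanding carefully gives $\nabla F = o(1/r)$, so $\|\nabla\bar f\| = o(r) = o(1/R)$ and therefore $R\|\nabla\bar f\| \to 0$ uniformly in $\theta$. The delicate point is making this cancellation visible and checking that the remaining terms are genuinely $o(1/r)$ uniformly in the angular direction; this is the heart of the argument.
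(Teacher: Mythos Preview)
Your argument is correct in all essential respects, but it is organized differently from the paper's.

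For the graph property, the paper argues geometrically: since vertical lines are the M\"{o}bius images of circles tangent to the $z$-axis at $o$, it suffices to show that each such circle meets $\graph(f|_{B_{r_0}})$ at most once away from $o$, which follows from the Lipschitz bound $|f(r_1,\theta)-f(r_2,\theta)|<|r_1-r_2|$. Your approach via the radial monotonicity of $r\mapsto r/\rho^2$ is more direct and arguably cleaner; it exploits from the outset that $\Phi$ preserves the polar angle, so injectivity reduces to a one-variable computation.

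For the gradient decay, the paper works entirely in polar coordinates. Because $\ol\theta=\theta$, one has $\nabla\ol f=(\ol f_{\ol r},\ol f_\theta/\ol r)$, and the two components can be computed separately: $\ol f_{\ol r}$ via the one-dimensional chain rule $\ol f_{\ol r}=(\partial\ol f/\partial r)/(\partial\ol r/\partial r)$, and $\ol f_\theta$ directly. After writing $f=r^2(1+\rho)$, $f_r=r(2+\phi)$, $f_\theta=r^2\psi$ with $\rho,\phi,\psi$ continuous and vanishing at $o$, each limit falls out by inspection. Your Cartesian route via $\nabla\ol f=(d\Phi)^{-T}\nabla F$ is equally valid but trades the one-dimensional chain rule for control of the $2\times 2$ inverse $(d\Phi)^{-1}$; the step ``$d\Phi=\rho^{-2}(I-2pp^T/\rho^2)+O(1)$, hence $\|(d\Phi)^{-1}\|=O(\rho^2)$'' deserves one more line (the $O(1)$ perturbation is harmless because $\|A^{-1}B\|=O(r^2)\to 0$), but the conclusion is right. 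Both arguments ultimately hinge on the same cancellation you identify: the umbilic hypothesis makes the two leading $\kappa(x,y)/r^2$ contributions to $\nabla F$ annihilate, leaving an $o(1/r)$ remainder. The paper's polar formulation makes this cancellation implicit in the algebra rather than highlighting it as the key mechanism.
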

\begin{proof}
\textbf{(I)} Since $f$ is $\C^1$, and $\|\nabla f(o)\|=0$, we may choose $r_0$ so small that, in polar coordinates,
\begin{equation}\label{eq:r1r2}
|f(r_1,\theta)-f(r_2,\theta)|<|r_1-r_2|,
\end{equation}
for all $0\leq r_1, r_2<r_0$ and $\theta\in\R/2\pi$. We claim then that $m(\graph f|_{B_{r_0}})$ intersects every vertical line at most once. If this claim holds, then the projection of $m(\graph f|_{B_{r_0}})$ into $\R^2$ defines a closed set $\R^2\setminus V$, where $V$ is some bounded open neighborhood of $o$, and the height  of $m(\graph f|_{B_{r_0}})$  over the $xy$-plane yields the desired function $\ol f\colon \R^2\setminus V\to\R$.
To establish our claim,  let $C$ be the collection of all circles $c\subset\R^3$ which pass through $o$ and are tangent to the $z$-axis. Further note that every vertical line coincides with the \Mobius inversion of a circle $c\in C$. 
So  it suffices to show that each circle $c\in C$ intersect $\graph (f|_{B_{r_0}})$ at most  at one point other than $o$. To see this suppose, towards a contradiction, that there exists some circle $c\in C$ which intersects $\graph (f|_{B_{r_0}})$ at two distinct points $p_1$, $p_2$ other than $o$. Then $p_i=(r_i,\theta_0,f(r_i,\theta_0))$ in cylindrical coordinates for some fixed $\theta_0\in\R/2\pi$. Set $f(r):=f(r,\theta)$. Then \eqref{eq:r1r2} yields that 
$$
|f(r_i)|=|f(r_i)-f(o)|< r_i.
$$
After a rescaling, we may assume that $c$ has radius one. Then,  
$$
|f(r_i)|=\sqrt{1-(1-r_i)^2}=\sqrt{r_i(2-r_i)}.
$$
The last two displayed expressions yield that $r_i>1$. On the other hand, by \eqref{eq:r1r2} and the triangle inequality,
$$
|r_2-r_1|\geq |f(r_2)-f(r_1)|\geq|f(r_2)|-|f(r_1)|\geq \sqrt{r_2(2-r_2)} -r_1.
$$
So if we suppose that $r_2\geq r_1$,  the last expression shows that $r_2<1$ and we have a contradiction.

\textbf{(II)} 
Now suppose that $f$ is $\C^2$,  has an umbilic at  $o$, and is positively curved at $o$. To show that $f$ is asymptotically constant we proceed in three stages: 

\textbf{(II.1)} First we derive some estimates for $f$ and its partial derivatives $f_r$ and $f_\theta$ in polar coordinates. Since principal curvatures of $f$ are equal and positive at $o$, we may assume after a rescaling that  $f-r^2$
vanishes up to order $2$ at $o$. 
So it follows that
$
\lim_{r\to0}(f-r^2)/r^2=0.
$
Consequently the function $\rho$ given by $\rho(o):=0$, and $\rho(x,y):=(f-r^2)/r^2$ for $(x,y)\ne o$ is continuous, and yields our first estimate:
\begin{equation}\label{eq:f}
f=r^2\(1+\frac{f-r^2}{r^2}\)=r^2(1+\rho).
\end{equation}
Similarly, since $f-r^2$ vanishes up to second order at $o$, it follows that  $f_x-2x$ and $f_y-2y$
vanish   up to the first order at $o$, which in turn yields that $f_x=2x+r\xi$ and $f_y=2y+r\eta$, for some continuous functions
$\xi$ and $\eta$ which vanish at $o$.
Using the chain rule, we now compute that
\begin{equation}\label{eq:fr}
f_r=\frac{xf_x+yf_y}{r}=\frac{x(	2x+r\xi)+y(2x+r\eta)}{r}
  =2r+\left( \frac xr \xi+\frac yr \eta\right)r=r(2+\phi),
\end{equation}
where  $\phi(o):=0$ and 
$
\phi(x,y):=(x/r) \xi+(y/r) \eta 
$
for $(x,y)\ne o$.
Since $|x/r|,|y/r|\le 1$, it follows that $|\phi|\le $$|\xi|+|\eta|$,  which shows that $\phi$ is continuous and  vanishes at $o$. Likewise
\begin{equation}\label{eq:ftheta}
f_\theta=-yf_x+xf_y=-y(2x+r\xi)+x(2y+r\eta)
   =\left(-\frac yr \xi +\frac xr\eta\right) r^2=r^2\psi,
\end{equation}
where $\psi$ is again continuous  and vanishes at $o$.

\textbf{(II.2)} Next we show that $\ol f$ converges uniformly to a constant at infinity. With $r_0$ as in part (I) and for every $p\in B_{r_0}$, set
\begin{equation*}\label{eq:mobius}
\big(\ol p, \ol f(\ol p)\big):=m\Big(\big(p,f(p)\big)\Big)=\(\frac{p}{\|p\|^2+f^2(p)},\, \frac{f(p)}{\|p\|^2+f^2(p)}\).
\end{equation*}
Thus if $p=(r,\theta)$ and $\ol p=(\ol r,\ol \theta)$ in polar coordinates, then 
 $\ol\theta=\theta$, and \eqref{eq:f} yields that
\begin{align}\label{eq:olfolr}
\ol r&=\frac{r}{r^2+f^2(r,\theta)}=\frac{1}{r+r^3(1+\rho)^2},\\
  \ol f(\ol r,\theta)&=\frac{f(r,\theta)}{r^2+f^2(r,\theta)}
 = \frac{1+\rho}{1+r^2(1+\rho)^2}.\label{eq:olfolr2}
\end{align}
Choosing  $r_0$ sufficiently small,  we may assume that $r^3(1+\rho)^2<r$ 
for $r<r_0$.  Then \eqref{eq:olfolr} yields
$
1/2r<\ol{r}<1/r.
$
So  $\ol r\to\infty$ if and only if $r\to 0$.
Now by \eqref{eq:olfolr2} 
$$
\lim_{\ol r\to\infty}\ol f(\ol r,\theta)=\lim_{r\to0}\frac{1+\rho}{1+r^2(1+\rho)^2}   =1,
$$
and the continuity of $\rho$  ensures that the convergence is uniform (with respect to
$\theta$).

\textbf{(II.3)} It remains only to check the rate of decay of $\nabla \ol f$. First note that by  \eqref{eq:olfolr} 
\begin{equation}\label{eq:rolr}
\lim_{r\to0}r\ol r=\lim_{\ol r\to\infty}r\ol r=\lim_{\ol r\to\infty}\frac{1}{1+r^2(1+\rho)^2}=1,
\end{equation}
where the continuity of $\rho$ again ensures the uniformity of the convergence. We need to show that 
$\lim_{\ol r\to\infty}\ol r\|\nabla \ol f(\ol r,\theta)\|=0$ uniformly. Thus, since $\nabla \ol f=(\ol f_{\ol r},\ol f_\theta/{\ol r})$ in polar coordinates, it suffices to check that 
\begin{equation*}
\lim_{\ol r\to\infty} \ol r\ol f_{\ol r}(\ol r,\theta)=0, \quad \text{and} \quad \lim_{\ol r\to\infty} \ol f_\theta(\ol r,\theta)=0,
\end{equation*}
uniformly. To establish the first convergence note that, by the chain rule and estimates \eqref{eq:f} and \eqref{eq:fr}
$$
\ol r \ol f_{\ol r}=\ol r\, \frac{\partial \ol f/\partial r}{\partial \ol r/\partial r}=\ol r\,\frac{r^2 f_r-2rf-f^2f_r}{-r^2+f^2-2rff_r}
 = r\ol r  \frac{\phi-2\rho-r^2(1+\rho)^2(2+\phi)}{-1+r^2(1+\rho)^2-2r^2(1+\rho)(2+\phi)}.
$$
Thus by \eqref{eq:rolr}, and since $\rho(o)=\phi(o)=0$,
$$
\lim_{\ol r\to\infty}\ol r\ol f_{\ol r}=\lim_{r\to 0}\frac{\phi-2\rho-r^2(1+\rho)^2(2+\phi)}{-1+r^2(1+\rho)^2-2r^2(1+\rho)(2+\phi)}=0,
$$
where the convergence is  uniform  by the  continuity of $\rho$ and $\phi$.  Similarly, \eqref{eq:olfolr2} together with estimates \eqref{eq:f} and \eqref{eq:ftheta} yield that
$$
\lim_{\ol r\to\infty}\ol f_\theta=\lim_{r\to0}\frac{(r^2-f^2)f_\theta}{(r^2+f^2)^2} 
=\lim_{r\to0} \frac{(1-r^2(1+\rho)^2)\psi}{(1+r^2(1+\rho)^2)^2} =0,
$$
since $\psi(o)=0$, and  the convergence is once again uniform by the continuity of $\rho$ and $\psi$.
\end{proof}

 The next  lemma we need employs the notion of an outer parallel surface, which is defined as follows. A \emph{closed convex surface} $M$ is the boundary of a compact convex set with interior points in $\R^3$. If  $M$ is $\C^{k\geq 1}$, and $n\colon M\to\S^2$ is its outward unit normal vector field or Gauss map, then for any $r\geq 0$, the \emph{outer parallel surface} of $M$ at the distance $r$ is defined as
$$
M^r:=\{\, p+r n(p)\mid p\in M\,\},
$$
which is again a $\C^k$ surface \cite{ghomi:stconvex,krantz&parks:dist}. 
Furthermore it is easy to show that the mapping $f\colon M\to M^r$ given by $f(p):=p+r n(p)$ preserves the principal directions (assuming $M$ is $\C^2$). To see this, let $X\in T_p M$ be a principal direction of $M$, then $dn_p(X)=k(p) X$. So $df_p(X)=(1+r k(p))X$. In particular, $T_p M$ and $T_{f(p)} M^r$ are parallel. Thus $n^r(f(p))=n(p)$ where  $n^r\colon M^r\to\S^2$ is the Gauss map of $M^r$. Now we may compute that 
$$
dn^r_{f(p)}\Big(df_p(X)\Big)=d(n^r\circ f)_p(X)=dn_p(X)=k(p) X=\frac{k(p)}{1+r k(p)}df_p(X),
$$
which shows $f$ preserves principal directions as claimed. Next, 
we also need to recall that the space of $\C^1$ maps $\S^2\to\R^3$ carries  the \emph{Whitney $\C^1$-topology}, which may be defined by stipulating that a pair of $\C^1$ mappings $f, g\colon \S^2\to\R^3$ are \emph{$\C^1$-close} provided that $\|f-g\|\leq \epsilon$ and $\|f_i-g_i\|\leq \epsilon$, $i=1$, $2$, with respect to some atlas of local coordinates on $\S^2$. A pair of embedded spheres are said to be $\C^1$-close if their inclusion maps are $\C^1$-close.

\begin{lem}\label{lem:sphere}
 For every $\C^2$ closed convex surface $M\subset\R^3$, there exists a $\C^2$ closed \emph{positively curved} surface $M'$  which may be arbitrarily $\C^1$-close to $\S^2$,  and a diffeomorphism $M\to M'$ which preserves the principal directions.
 \end{lem}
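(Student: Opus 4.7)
The strategy is to combine the outer parallel-surface operation with a \Mobius inversion. The parallel surface pushes $M$ outward to approximate a large sphere, and a subsequent inversion about an interior point then shrinks it back to near the unit sphere while also positivizing the Gauss curvature. Place the origin $o$ in the interior of the convex body bounded by $M$, and for each $r>0$ set
\[
\Phi_r := \mu_r \circ m \circ f_r, \qquad M'_r := \Phi_r(M),
\]
where $f_r(p)=p+rn(p)$ is the parallel-surface map onto $M^r$, $m(q)=q/\|q\|^2$ is the \Mobius inversion centered at $o$, and $\mu_r(q)=rq$ is dilation by $r$. Each of the three factors preserves principal directions -- respectively by the calculation preceding the lemma, by the fact invoked just before Lemma \ref{lem:decay}, and trivially -- so $\Phi_r\colon M\to M'_r$ is the desired diffeomorphism provided that, for $r$ sufficiently large, $M'_r$ is $\mathcal{C}^1$-close to $\mathbf{S}^2$ and strictly positively curved.

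For $\mathcal{C}^1$-closeness, setting $q=p+rn(p)$ gives $\|q\|^2 = r^2 + 2r\langle p,n(p)\rangle +\|p\|^2$, and
\[
\Phi_r(p) \;=\; \frac{rq}{\|q\|^2}\;=\;\frac{n(p)+p/r}{1+2\langle p,n(p)\rangle/r+\|p\|^2/r^2} \;\longrightarrow\; n(p)
\]
uniformly on $M$ at rate $O(1/r)$. Using $dm_q = \|q\|^{-2}R_q$, where $R_q$ is reflection across $q^{\perp}$, the unit normal of $M'_r$ at $\Phi_r(p)$ is $R_q\,n(p) = n(p) - 2\|q\|^{-2}\langle q,n(p)\rangle q$, which tends uniformly to $-n(p)$; up to sign, this agrees with the unit normal of $\mathbf{S}^2$ at the limit point $n(p)$. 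Since the Gauss map $n\colon M\to\mathbf{S}^2$ is surjective, both the positions and the tangent planes of $M'_r$ lie within $O(1/r)$ of those of $\mathbf{S}^2$, giving $\mathcal{C}^1$-closeness for $r$ large.

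The main obstacle -- and the reason a bare parallel surface will not do -- is securing strict positivity of Gauss curvature, since the parallel map sends each principal curvature $k_i$ of $M$ to $k_i/(1+rk_i)$ and therefore preserves any flat direction of $M$. The inversion supplies what is missing, via the standard formula $\tilde k = 2\langle q,\nu\rangle -\|q\|^2 k$ (up to a global orientation sign) for how principal curvatures transform under $m$; cf.\ \cite[Lemma P.6.1]{hertrich}. Substituting $k = k_i/(1+rk_i)$ at $q=p+rn(p)$ with $\nu = n(p)$, an elementary algebraic simplification yields
\[
\tilde k_i \;=\; \frac{2\langle p,n(p)\rangle + 2r + (r^2-\|p\|^2)\,k_i}{1+rk_i}.
\]
Since $o$ is strictly interior to $M$, the support value $\langle p,n(p)\rangle$ is strictly positive on $M$, so for $r$ larger than $\max_{p\in M}\|p\|$ every summand in the numerator is non-negative and $2r>0$, whence $\tilde k_i > 0$. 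The dilation $\mu_r$ does not affect signs, so both principal curvatures of $M'_r$ are of the same nonzero sign, i.e., $M'_r$ is strictly positively curved, completing the proof.
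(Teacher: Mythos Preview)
Your proof is correct and follows the same overall scheme as the paper: push $M$ outward to a parallel surface, apply a \Mobius inversion centered at an interior point, and rescale. The difference is in how strict positivity of the curvature is secured. The paper argues geometrically via Blaschke's rolling theorem: for $r$ large the inner support balls of $M^r$ share a common interior point, and inversion sends each such ball to an outer support ball of $m(M^r)$, forcing positive curvature. You instead compute directly, combining the parallel-surface formula $k_i\mapsto k_i/(1+rk_i)$ with the inversion law $\tilde k=2\langle q,\nu\rangle-\|q\|^2 k$ to obtain a closed-form numerator $2\langle p,n(p)\rangle+2r+(r^2-\|p\|^2)k_i$ that is manifestly positive once $r>\max_p\|p\|$. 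Your route avoids Blaschke's theorem and is more explicit; the paper's is more geometric. Your treatment of $\C^1$-closeness (uniform convergence of positions and unit normals) is at the same informal level as the paper's one-line assertion that $M^r/(1+r)\to\S^2$ in $\C^1$.
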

\begin{proof}
Since $M$ is $\C^2$ it has an inner support ball at each point by Blaschke's rolling theorem \cite{schneider:book}. Indeed, if we let $\rho$ be the minimum of the radii of curvature of $M$, then through each point $p$ of $M$ there passes a  ball $B_p$ of radius $\rho$ which lies inside the convex body bounded by $M$. Suppose that there exists a common point $o$ in the interior of all these inner support balls. Then through each point $m(p)$ of the \Mobius inversion $m(M)$ there passes a ball $m(B_p)$ which contains $m(M)$. Hence $m(M)$ has strictly positive curvature, and is the desired surface $M'$, since $m$ preserves the principal directions. If, on the other hand, the inner support balls of $M$ do not have a common point, we may replace $M$ by an outward parallel surface $M^r$ at a sufficiently large distance $r$. Then the inner support balls of $M^r$  will pass through a common point, and it remains only to recall that that the mapping  $M\to M^r$ given by $p\mapsto p+r n(p)$ preserves the principal directions, as we demonstrated above.  Finally, choosing $r$ sufficiently large, we may make sure that $M^r$ is as $\C^1$-close to a sphere as desired. Indeed $M_r/(1+r)$ converges to a sphere $S$ with respect to the Whitney $\C^1$-topology  as $r\to\infty$, and after a rigid motion and rescaling we may assume that $S=\S^2$.
\end{proof}

We only need one more basic fact for the proof of Theorem \ref{thm:1}. A \emph{round sphere} is any embedding $\S^2\to\R^3$ which is obtained by a rigid motion and homothety of the standard sphere.

\begin{lem}\label{lem:graph}
Let $M\subset\R^3$ be a $\C^1$ embedded sphere which is tangent to the $xy$-plane at $o$. If $M$ is sufficiently $\C^1$-close to a round sphere, then the M\"{o}bius inversion of $M$ forms a graph over the $xy$-plane.
\end{lem}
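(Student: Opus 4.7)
The plan is to show that the map $F:=\pi\circ m : M \to \R^2\cup\{\infty\}$, with $\pi(x,y,z) := (x,y)$ and $F(o) := \infty$, is a homeomorphism; then $\pi$ restricted to $m(M\setminus\{o\})$ is injective, which is exactly the statement that $m(M)$ is the graph of a single-valued function over the $xy$-plane. After a small adjustment of the nearby round sphere we may assume it is $S:=\{x^2+y^2+(z-r)^2=r^2\}$ for some $r>0$, which is itself tangent to the $xy$-plane at $o$. A direct computation shows $m(S)=\{z=1/(2r)\}$, the horizontal plane; in particular $F|_S$ is itself a homeomorphism $S\to\R^2\cup\{\infty\}$.

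Next I would verify that $F$ is a local homeomorphism at every point of $M$. Near $o$, the tangency of $M$ to the $xy$-plane writes a neighborhood of $o$ in $M$ as $\graph(f|_{B_{r_0}})$ for some $\C^1$ function $f$ with $f(o)=0=\|\nabla f(o)\|$ and some $r_0>0$. Lemma \ref{lem:decay} then produces a bounded open $V\ni o$ and $\ol f : \R^2\setminus V\to\R$ with $m(\graph(f|_{B_{r_0}}))=\graph(\ol f)$. Thus on this neighborhood $F$ factors as the \Mobius inversion (a diffeomorphism off $o$) followed by the projection onto a graph, which is injective by the graph property; a continuity check at $o$ (using $|\pi(m(p))|\le 1/|p|$ together with the fact that $|p|^2\sim p_x^2+p_y^2$ along $M$) shows $F$ extends to a homeomorphism from $\graph(f|_{B_{r_0}})$ onto $(\R^2\setminus V)\cup\{\infty\}$. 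Away from $o$, on the compact set $K := M\setminus \graph(f|_{B_{r_0/2}})$, both $m$ and $\pi$ are smooth, and $F|_S$ has nonvanishing Jacobian on the corresponding subset of $S$ (since $m(S)$ is horizontal). By the $\C^1$-closeness of $M$ to $S$ the Jacobian of $F|_M$ remains nonzero on $K$, so $F$ is a local diffeomorphism, and hence a local homeomorphism, there as well.

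To conclude globally, $F : M \to \R^2\cup\{\infty\}$ is a continuous local homeomorphism from the compact connected sphere $M$ into a connected $2$-sphere. It is open by invariance of domain, with compact image, hence clopen, hence surjective. A proper local homeomorphism between connected manifolds is a covering map, and since $\R^2\cup\{\infty\}$ is simply connected, $F$ must be a homeomorphism; in particular $F$ is injective, which yields the desired conclusion. The main obstacle is controlling $F$ at $o$, where it blows up; Lemma \ref{lem:decay} sidesteps this issue by producing the explicit graph structure of $m(\graph(f|_{B_{r_0}}))$, bypassing any direct differentiability analysis in an inversive chart at $\infty$.
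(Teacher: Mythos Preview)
Your argument is correct and arrives at the same conclusion via a genuinely different packaging. Both proofs split the surface into a small neighborhood of $o$ (handled by Lemma~\ref{lem:decay}) and its compact complement (handled by $\C^1$-closeness to $S$), but the global step differs. The paper works with the family $C$ of circles through $o$ tangent to the $z$-axis---these are exactly the fibers of your map $F=\pi\circ m$---and shows directly that each meets $M$ in precisely two points: transversality (from orthogonality of $C$ to $S$ plus $\C^1$-closeness) makes the intersection number $\#(c\cap M)$ locally constant on the connected space $C_{\ge\epsilon/2}$, and the known value $2$ on small circles pins it down. You instead assemble the same local data (nonvanishing Jacobian of $F$ on $K$, graph structure near $o$) into a local homeomorphism $M\to\S^2$ and invoke covering-space theory over a simply connected base. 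Your route is a bit more abstract and avoids the Hausdorff topology on $C$; the paper's route is more hands-on and makes the geometric reason (orthogonality of circles to $S$) explicit. One small slip: the inequality $|\pi(m(p))|\le 1/|p|$ points the wrong way for showing $F(p)\to\infty$; what you actually need (and what your remark $|p|^2\sim p_x^2+p_y^2$ delivers) is the lower bound $|\pi(m(p))|=\sqrt{p_x^2+p_y^2}/|p|^2\sim (p_x^2+p_y^2)^{-1/2}\to\infty$.
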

\begin{proof}
The overall strategy here is similar to that in the proof of Lemma \ref{lem:graph}, i.e., we just need to check that every circle $c\in C$ intersects $M$ at precisely two points, where $C$ again stands for the space of all circles in $\R^3$ which are tangent to the $z$-axis at $o$. To  this end first recall that, as we showed in the proof of Lemma \ref{lem:decay}, there exists an $\epsilon>0$ such that circles  $c\in C$ of radius $\leq\epsilon$ intersect $M$ at precisely two points already.
So it suffices to consider only the collection of circles $C_{\geq\epsilon/2}\subset C$ of radius $\geq \epsilon/2$. 
Next note that
by assumption $M$ is $\C^1$-close to a sphere $S$, which we may assume to be tangent to the $xy$-plane at $o$.   Further every circle $c\in C$ intersects $S$ orthogonally. Since $M$ and $S$ are $\C^1$-close, we may conclude then that $M$ intersects every circle $c\in C_{\geq\epsilon/2}$ transversely. Thus it follows that the number $\#(c\cap M)$ of intersection points of $c$ with $M$ is finite, and the mapping $c\mapsto \#(c\cap M)$ is locally constant on $C_{\geq\epsilon/2}$ with respect to the Hausdorff topology on $C$. This yields that $c\mapsto\#(c\cap M)$ is a constant function on  $C_{\geq\epsilon/2}$, since $C_{\geq\epsilon/2}$ is  connected.
 So $\#(c\cap M)=2$ for every $c\in C_{\geq\epsilon/2}$, since circles of radius $\epsilon$ intersects $M$ precisely twice.
\end{proof}

Now we have all the pieces in place to complete the proof of Theorem \ref{thm:1}, as follows.
After a rigid motion we may assume that an umbilical point $p$ of $M$ lies at the origin $o$ of $\R^3$, and $M$ is tangent to the $xy$-plane at $o$. Further, by Lemma \ref{lem:sphere} we may assume that $M$  is $\C^1$-close to a sphere and is positively curved. So the M\"{o}bius inversion $m(M)$ forms a graph over the $xy$-plane by Lemma \ref{lem:graph}. Finally, this graph is asymptotically constant by Lemma \ref{lem:decay}.

\begin{note}
Proof of Theorem \ref{thm:1} in this section reveals that the theorem is valid not just for convex surfaces, but applies to any $\C^2$ 
closed surface $M\subset\R^3$ which may be transformed to a convex one by means of \Mobius transformations, and the operation of moving $M$ parallel to itself.
\end{note}

\begin{note}
In Lemma \ref{lem:sphere} we may choose the surface $M'$ to be 
$\C^2$-close to $M$. Indeed, as we showed in the proof of the lemma, we may assume that $M$ 
has positive curvature. Then the \emph{support function} $h_M\colon\R^2\to\R$  of $M$ is also $\C^2$, e.g., see \cite{schneider:book}. 
Further, the support function of the outer parallel surface $M^r$  is $r + h_M$, so if we dilate $M^r$
by the factor  $1/r$, then  
$$
h_{\frac{1}{r} M^r} = 1 +\frac{h_M}{r} 
$$
which converges to $1$ in the $\C^2$ 
topology. Thus $(1/r)M^r$
converges to 
$\S^2$ in the $\C^2$ topology. This argument also shows that if $M$ is $\C^k$, $2 
\leq k \leq\infty$, then $(1/r) M^r$ 
converges 
to $\S^2$ in the $\C^k$ 
topology. 
\end{note}

%%%%%%%%%%%%%%%%%%%%%%%%%%%%%%%%%%%%%%%%%%%%%%%%
\section{Applications of the Divergence Theorem:\\Proofs of Theorems \ref{thm:2} and \ref{thm:3}}\label{sec:2}
%%%%%%%%%%%%%%%%%%%%%%%%%%%%%%%%%%%%%%%%%%%%%%%%
Let us say that a function $f\colon\R^2\to\R$ \emph{decays uniformly faster} than $1/r$, provided that, in polar coordinates, $\lim_{r\to\infty} rf(r,\theta)=0$ uniformly
 in  $\theta$.  Here $\nabla\cdot$ stands for divergence.

\begin{lem}\label{lem:uv}
Let $V\colon\R^2\to\R^2$ be a   $\C^1$ vector field  whose norm decays uniformly faster than $1/r$. Then 
$$
\lim_{r\to\infty}\int_{B_r}\nabla\cdot V=0.
$$
\end{lem}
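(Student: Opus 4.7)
The plan is a direct application of the divergence theorem followed by a uniform bound on the boundary integral. Specifically, since $V$ is $\C^1$ on all of $\R^2$, the divergence theorem applied to the disk $B_r$ gives
$$
\int_{B_r}\nabla\cdot V \;=\; \int_{\partial B_r} \langle V,\nu\rangle\, ds,
$$
where $\nu$ is the outward unit normal. So it suffices to show that the boundary integral tends to $0$ as $r\to\infty$.

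Next, parametrize $\partial B_r$ by $\theta\mapsto (r,\theta)$ in polar coordinates, so that $ds = r\, d\theta$. Using $|\langle V,\nu\rangle|\le \|V\|$, one gets the pointwise estimate
$$
\left|\int_{\partial B_r}\langle V,\nu\rangle\, ds\right|
\;\le\; \int_0^{2\pi}\|V(r,\theta)\|\, r\, d\theta
\;\le\; 2\pi\,\sup_{\theta\in[0,2\pi]} r\,\|V(r,\theta)\|.
$$
By hypothesis, $\|V\|$ decays uniformly faster than $1/r$, which is exactly the statement that $\sup_\theta r\|V(r,\theta)\|\to 0$ as $r\to\infty$. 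Combining the two displays yields the desired limit.

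There is really no serious obstacle here; the only thing to be careful about is invoking the divergence theorem in the correct form (which is standard for a $\C^1$ vector field on the closed disk $\overline{B_r}$) and making sure the decay hypothesis is used with its full uniformity in $\theta$ so that the supremum, and not merely a pointwise limit, goes to zero.
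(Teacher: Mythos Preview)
Your proof is correct and follows essentially the same approach as the paper: apply the divergence theorem on $B_r$, bound the resulting boundary integral by $\int_0^{2\pi} r\|V(r,\theta)\|\,d\theta$, and use the uniform decay of $r\|V(r,\theta)\|$ to conclude. The only cosmetic difference is that the paper passes the limit under the integral sign using uniform convergence, whereas you bound by $2\pi\sup_\theta r\|V(r,\theta)\|$; these are equivalent here.
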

\begin{proof}
Let $c(\theta):=(r\cos(\theta), r\sin(\theta))$ be a parametrization for $\partial B_r$, and $n$ be its outward unit normal. By the divergence theorem and Cauchy-Schwartz inequality
\begin{align*}
\left|\int_{B_r}\nabla\cdot V \right|
 &= \left|\int_{\partial B_r} V\cdot n\right| \\
& \leq \int_{\partial B_r}\|V\|=\int_0^{2\pi} \|V(c(\theta))\|\|c'(\theta)\|d\theta=\int_0^{2\pi} \|V(r,\theta)\|r\,d\theta.
\end{align*}
Thus it follows that
$$
\lim_{r\to\infty}\left|\int_{B_r}\nabla\cdot V\right|\leq\lim_{r\to\infty}\int_0^{2\pi} \|V(r,\theta)\|rd\theta=\int_0^{2\pi}\lim_{r\to\infty} \|V(r,\theta)\|rd\theta=0,
$$ 
where exchanging the order of integration and the limit here is warranted by the  assumption that the convergence of the functions $\|V(r,\theta)\|r$ is uniform in $\theta$.
\end{proof}

Equipped with the last fact, we are now ready to complete the proofs of our main results:

\begin{proof}[Proof of Theorem \ref{thm:2}]
Set $X=(X^1, X^2)$, $Y=(Y^1, Y^2)$, and note that $f_{XY}=\sum_{i,j=1}^2 f_{ij}X^i Y^j=f_{YX}$. Thus, if $k_X(p):=k(p,X)$,  then using \eqref{eq:k} we have
 \begin{align*}
(k_X-k_Y)&(1+f_X^2)(1+f_Y^2)\sqrt{1+\|\nabla f(p)\|^2}\\&=f_{XX}(1+f_Y^2)-f_{YY}(1+f_X^2)\\
&=\Big(f_X\big(1+f_Y^2\big)\Big)_X-\Big(f_Y\big(1+f_X^2\big)\Big)_Y.
\end{align*}
Setting $u:=f_X(1+f_Y^2)$, $v:=f_Y(1+f_X^2)$,   we continue the above computation as:
\begin{align*}
u_X-v_Y&=u_1X^1+u_2X^2-v_1Y^1-v_2Y^2\\
&=\big(uX^1-vY^1\big)_1+\big(uX^2-vY^2\big)_2.
\end{align*}
The last line is the divergence  of the vector field 
$$(uX^1-vY^1,uX^2-vY^2),$$
whose norm vanishes uniformly faster than $1/r$ since  $f$ is asymptotically constant. 
So applying Lemma \ref{lem:uv} to this vector field  completes the proof.
 \end{proof}

\begin{proof}[Proof of Theorem \ref{thm:3}]
After a rotation of the coordinate axis we may assume that $\theta_0=0$. A simple computation shows that
$$
k_{X(\theta)}=\frac{ f_{11}\cos ^2(\theta)+f_{12}\sin (2 \theta) +f_{22}\sin ^2(\theta) }{1+ \big(f_{1}\cos (\theta)+ f_{2}\sin (\theta)\big)^2}\frac{1}{\sqrt{1+\|\nabla f\|^2}},
$$
and another straight forward calculation yields
$$
\frac{\partial}{\partial\theta}\Big|_{\theta=0}k_{X(\theta)}=\frac{2\Big( (1+f_1^2) f_{12}- f_2 f_1
   f_{11}\Big)}{\left(1+f_1^2\right)^2}\frac{1}{\sqrt{1+\|\nabla f\|^2}}.
$$
So $(1,0)=X(0)$ is a principal direction of $f$ at $p$ if and only if the following equation holds at $p$
$$
 (1+f_1^2) f_{12}- f_2 f_1
   f_{11}=0,
$$
which is equivalent to
\begin{equation}
\( \frac{f_2}{\sqrt{1+f_1^2}}\)_1=0.
\end{equation}
Thus applying Lemma \ref{lem:uv} to the vector field $(f_2/\sqrt{1+f_1^2},0)$  completes the proof.
\end{proof}

\begin{note}
Proofs of Theorems \ref{thm:2} and \ref{thm:3} do not use the assumption that $f$ converges to a constant at infinity. Indeed these results require only that the gradient vanishes uniformly faster than $1/r$. So $f$ needs not even be bounded (consider for instance any smooth function $f\colon\R^2\to\R$ which coincides with $\ln(\ln r)$ outside a compact set.)
\end{note}

 \begin{note}\label{note:pdes}
  A surface $M\subset\R^3$ has an umbilical point $p$ if and only if its second fundamental form is a multiple of its first fundamental form at $p$. When $M$ is the graph of a function $f\colon\R^2\to\R$, this means that 
  $$
\rank  \left(
\begin{array}{ccc}
f_{11} & f_{12} & f_{22}\\
1+f_1^2 &f_1 f_2 &1+ f_2^2
\end{array}\right)\leq 1,
  $$
 which is equivalent to the following system of equations
\begin{align}\label{eq:s1}
(1+f_1^2) f_{12}- f_1f_2 f_{11}&=0,\\ \notag(1+f_1^2) f_{22}-(1+f_2^2)f_{11}&=0.
\end{align}
Indeed  the first equation holds if and only if $(1,0)$ is a principal direction of $f$, as we showed in the proof of Theorem \ref{thm:3}, and  the second equation holds if and only if  the normal  curvatures of $f$ in the directions $(1,0)$ and $(0,1)$ agree, as we saw in the proof of Theorem \ref{thm:2}.
Also recall that the above system of equations is equivalent to
\begin{align*}
\( \frac{f_2}{\sqrt{1+f_1^2}}\)_1&=0,\\ \Big((1+f_1^2)f_{2} \Big)_2-\Big((1+f_2^2)f_{1}\Big)_1&=0.
\end{align*}
Yet another way to characterize the umbilics of a graph is as the solutions of
$$
\Big(f_{22} \left(1+f_1^2\right)-2 f_{1} f_2
   f_{12}+ f_{11}\left(1+f_2^2\right)\Big)^2-4
   \left(1+f_1^2+f_2^2\right) \left(f_{22}
   f_{11}-f_{12}^2\right)=0.
   $$
The above PDE is obtained by setting $H^2-K$ equal to zero, where $H$ and $K$ are the mean and Gauss curvatures of the graph of $f$ respectively. This also yields a coordinate free expression which is equivalent to the equations above:
$$
\(\nabla\cdot\(\frac{\nabla f}{\sqrt{1+\|\nabla f\|^2}}\)\)^2-\frac{4\det\Hess(f)}{(1+\|\nabla f\|^2)^2}=0.
$$
To prove Carath\'{e}odory's conjecture it suffices to show that any one of the above systems holds at some point, assuming $f$ is asymptotically constant. 
\end{note}

%%%%%%%%%%%%%%%%%%%%%%%%%%%%%%%%%%%%%%%%%%%%%%%%%%
\section{Some Examples}\label{sec:3}
%%%%%%%%%%%%%%%%%%%%%%%%%%%%%%%%%%%%%%%%%%%%%%%%%%

\subsection{}
The graph of a function $f\colon\R^2\to\R$ which is not asymptotically constant may not have any umbilics.
Indeed, any negatively curved graph such as $f(x,y)=xy$, or any cylindrical graph given by $f(x,y)=g(x)$ where $g\colon\R\to\R$ is a smooth function without inflection points are umbilic free. There are even umbilic free graphs which are bounded above and below. One such example is due to Bates \cite{bates}, and another  is given by
$$
f_\lambda(x,y)=1+\lambda\frac{x+y^2}{\sqrt{1+(x+y^2)^2}},
$$
for $\lambda>0$. Indeed, with the aid of a computer algebra system, one may easily verify that the equations \eqref{eq:s1} are never simultaneously satisfied for $f_\lambda$, unless $\lambda=0$. The  \Mobius inversion of  $\graph(f_\lambda)$  is $\C^\infty$  in the complement of one point, is differentiable everywhere, and converges
to a sphere with respect to Hausdorff distance as $\lambda\to 0$; see Figure \ref{fig:1}. The differentiability of the inversion is due to the fact that $\graph(f_\lambda)$ is contained between a pair of horizontal planes, which in turn implies that the inversion rests between a pair of spheres at $o$. In particular, the tangent cone of the inversion at $o$ is a plane.
\begin{figure}[h] 
   \centering
   \includegraphics[height=1.2in]{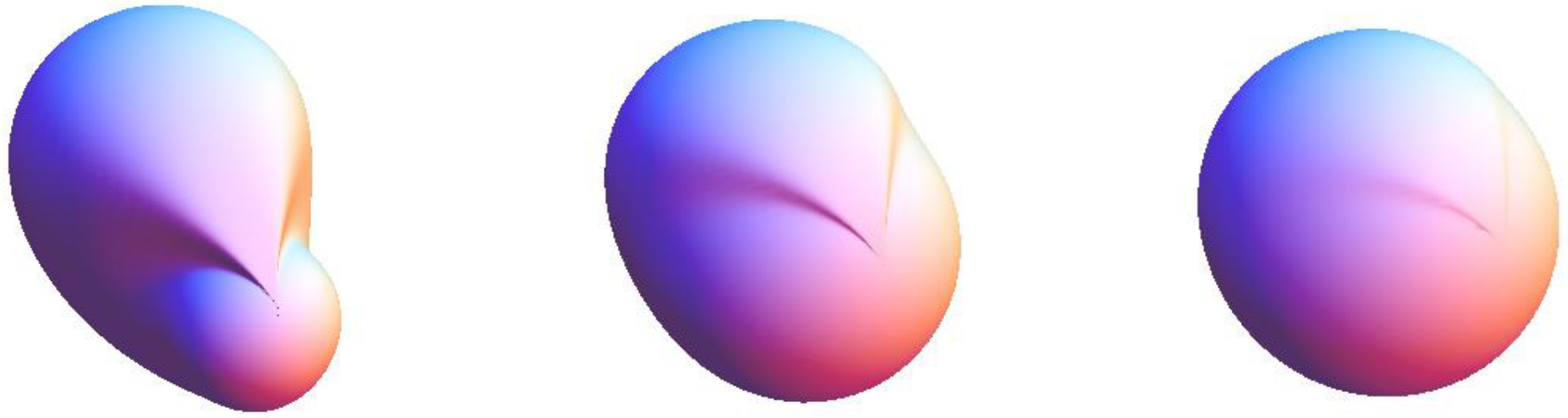} 
   \caption{}
   \label{fig:1}
\end{figure}

\subsection{}
Using \Mobius inversions, one may construct closed \emph{convex} surfaces which are umbilic free and $\C^\infty$ in the complement of one point. A family of such examples is given by  inverting the graph of
$$
f_\lambda(x,y)=1+\lambda\sqrt{1+x^2},
$$
for  sufficiently small $\lambda>0$.  It is easy to see that these surfaces are umbilic free for all $\lambda\neq 0$. Further, if $\lambda$ is small, then each support plane of these graphs separates the graph from the origin $o$. Thus the inversion of the graph has a supporting sphere at each point which shows that its must be convex. Furthermore, since $f_\lambda$ converges to the plane $z=1$, it follows that 
inversions of $\graph(f_\lambda)$ converge to a sphere as $\lambda\to 0$, as shown in Figure \ref{fig:2}. 
\begin{figure}[h] 
   \centering
   \includegraphics[height=1.2in]{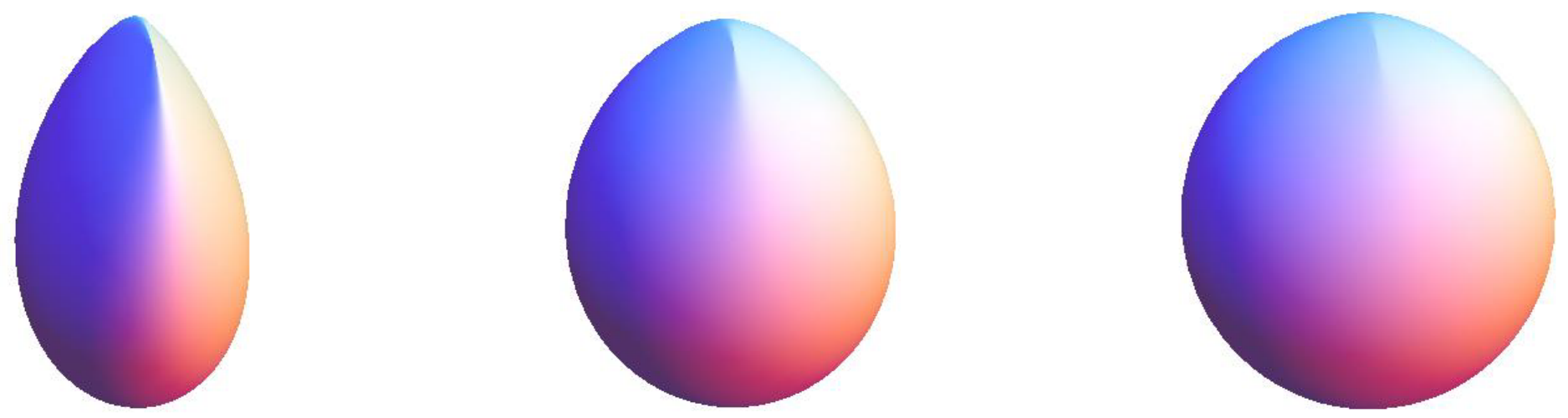} 
   \caption{}
   \label{fig:2}
\end{figure}
An approximation of these inversions, which may be achieved for instance by applying a certain convolution to their support functions \cite{schneider:book,ghomi:polytopes}, yields $\C^\infty$ closed convex surfaces all of whose umbilics are contained in a region with arbitrarily small diameter and total curvature, c.f. \cite{gutierrez&sanchez}. Indeed, a neighborhood of the singular point may be represented as the graph of a convex function over a support plane. Identifying the support plane with the $\R^2\times\{0\}\subset\R^3$, we then obtain a convex function $f\colon B_r\to\R$ which is $\C^\infty$ and positively curved in the complement of the origin $o$. For every open neighborhood $U$ of  $o$, with closure $\ol U\subset B_r$, there exists a $\C^\infty$ convex function $\tilde f\colon B_r\to\R$ such that $\tilde f= f$ on $B_r-U$, see  \cite{ghomi:convexfunctions}. Replacing $\graph(f)$ with $\graph(\tilde f)$ then yields a smoothing of our surfaces, which preserves each surface in the complement of any given open neighborhood of the singularity.

\subsection{}
Another example of a family of convex surfaces which are umbilic free and $\C^\infty$ in the complement of one point is obtained by inverting the graphs of
$$
f_\lambda(x,y)=1+\lambda\(\sqrt{1+x^2}+x+\sqrt{1+y^2}+y\)
$$
for $\lambda>0$. It is easy to show that these graphs never satisfy the first equation in \eqref{eq:s1} unless $\lambda=0$. It is also worth remarking that, in contrast to the previous example, these graphs have everywhere positive curvature. 
Thus one may say that Carath\'{e}odory's conjecture has no analogue for complete convex surfaces which are not compact. Similar to the previous example, the inversion of these graphs  converge to a sphere as $\lambda\to 0$, see Figure \ref{fig:3}; 
\begin{figure}[h] 
   \centering
   \includegraphics[height=1.1in]{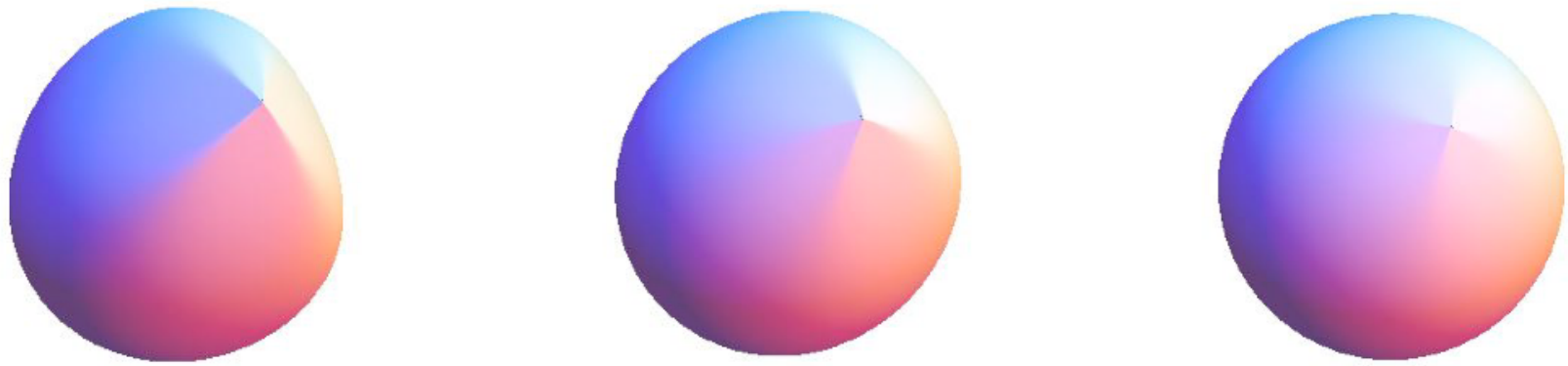} 
   \caption{}
   \label{fig:3}
\end{figure}
however, here, the singularity is of the ``cone type", i.e., the normal cone at the singularity has dimension $3$ (as opposed to the previous example where the singularity was of the ``ridge type", i.e., the normal cone was only $2$-dimensional). 
Similar examples may be generated by any function of the form $f_\lambda(x,y)=1+\lambda(g(x)+h(y))$ where
$g$ and $h$ are $\C^2$, neither $g'$ or $h'$ ever vanishes, and $g''$ and $h''$ are
always positive.

\section*{Acknowledgements}
We thank Jason Cantarella,  Serge Tabachnikov, Brian White, and Frederico Xavier for useful communications.

\bibliographystyle{abbrv}
\bibliography{references}

\end{document}